\newtheorem{theorem}{Theorem}[section]
\newtheorem{lemma}[theorem]{Lemma}
\newtheorem{corollary}[theorem]{Corollary}
\theoremstyle{definition}
\newtheorem{definition}[theorem]{Definition}
\newtheorem{remark}[theorem]{Remark}
\begin{document}
\title[Continuity and bi-Lipschitz properties] 
{Continuity and bi-Lipschitz properties of the Hurwitz and its invariant metrics}

\author[Arstu and S. K. Sahoo]{Arstu$^\dagger$ and Swadesh Kumar Sahoo$^\dagger$}

\address{$^\dagger$Department of Mathematics, Indian Institute of Technology Indore, 
Indore--453 552, India}
\email{arstumothsra@gmail.com}
\email{swadesh.sahoo@iiti.ac.in}

\subjclass[2010]{30F45}

\keywords{Hyperbolic metric, Gardiner-Lakic metric, Hurwitz metric, M\"obius invaiant metric}


\begin{abstract}
This paper attempts to study the continuity of the Hurwitz metric in arbitrary proper subdomains of the complex plane
and to introduce a new invariant metric bi-Lipschitz 
equivalent to the Hurwitz metric in hyperbolic domains. The lower semi-continuity and other basic properties of this invariant metric are also presented. 
\end{abstract}

\maketitle


\section{Introduction }\label{sec1}
\setcounter{equation}{0}


The classical Poincar\'e's hyperbolic metric was first introduced in the early nineties. Since then, 
a family of conformal metrics that are closely related to the hyperbolic metric 
was introduced by several mathematicians. Just to name a few, they are 
the Hurwitz metric \cite{Min16}, the Gardiner-Lakic metric \cite{GL01},
the Hahn metric \cite{Hahn81}, the quasihyperbolic metric \cite{GP76},
and many more. 
As the hyperbolic metric is valid for only hyperbolic domains of the plane due 
to its very difficult nature to compute explicitly, these metrics play a vital 
role to enhance the study of the hyperbolic metric in several purposes.
One such important problem is bi-Lipschitz equivalence of the hyperbolic metric
with the other conformal metrics. For instance, the Gardiner-Lakic metric in a hyperbolic domain is 
defined by taking the supremum of the hyperbolic metric in twice punctured plane, punctured at the 
distinct pair of points in the complement of the domain and the supremum is taken over these pair of points. 
Note that, similar to the hyperbolic metric and other related metrics, 
the Gardiner-Lakic metric satisfies the domain monotonicity property. 
Bi-Lipschitz equivalence of the hyperbolic and the Gardiner-Lakic metrics is proved in \cite{GL01}. 
Some other properties including the continuity, M\"obius invariance of the Gardiner-Lakic metric 
and other related metrics with the hyperbolic metric in hyperbolic domains are studied by 
Herron et al. in 2008 (see \cite{HMM08}). 

In this research, we adopt the definition of the Gardiner-Lakic 
metric of the hyperbolic metric to define a new metric by replacing the hyperbolic metric with the 
Hurwitz metric.
We show that this is invariant under M\"obius 
transformations which fix $\infty$ in hyperbolic domain of the Complex plane and M\"obius invariant in hyperbolic domain of the Riemann sphere. Continuity of the Hurwitz metric play a vital role for proving Lower semi-continuity of  the Hurwitz metric in the sense of Gardiner and Lakic in hyperbolic domains. Furthermore, 
we do establish a bi-Lipschitz equivalence of this new metric with the Hurwitz metric, the hyperbolic
and the quasihyperbolic metrics.
However, on hyperbolic domain with connected boundary, we give a sharper bi-Lipschitz constant for the first case.

\section{Preliminaries} 
In this work, relatively standard notations are used. First, the complex plane is denoted by $\mathbb{C}$
and the unit disk $\{z\in\mathbb{C}:\,|z|<1\}$ is denoted by $\mathbb{D}$ and the punctured unit disk $\mathbb{D}\setminus\{0\}$ takes the notation $\mathbb{D}^*$.. 
By a planar domain, we mean an open and connected subset of $\mathbb{C}$. Unless specified, 
$\Omega$ denotes a hyperbolic domain in $\mathbb{C}$, i.e. its complement 
$\Omega^c:=\mathbb{C}\setminus \Omega$ possesses at least two points. 
Each such $\Omega$ carries a unique maximal constant curvature $-1$ associated with the conformal metric 
$$
\lambda_\Omega(w)\,|dw|=\lambda_{\mathbb{D}}(z)\,|dz|=\frac{2}{1-|z|^2}\,|dz|
$$
referred as the {\em Poincar\'e hyperbolic metric} in $\Omega$, where $w=f(z)$ is a universal
covering map of $\mathbb{D}$ onto $\Omega$. This is independent of the choice of
$f$ in the sense that the above relation continues to hold if $f$ is replaced by 
the composition $f\circ h$, where $h:\,\mathbb{D}\to \mathbb{D}$ is a M\"obius transformation.

In association with the hyperbolic metric,
Gardiner-Lakic \cite{GL01} introduced a M\"obius 
invariant metric $\kappa_\Omega(w)\,|dw|$ which is defined by 
$$
\kappa_\Omega(w)=\sup_{a,b\in \Omega^c}\lambda_{\mathbb{C}\setminus\{a,b\}}(w)
$$
for $w\in\Omega$ and distinct points
$a,b$. Further, Gardiner and Lakic proved in the same paper (see \cite[Theorem~3]{GL01}) 
that the $\kappa_\Omega$-metric is bi-Lipschitz equivalent to the hyperbolic metric 
$\lambda_\Omega$. However, discussions on various improvements in the upper bi-Lipschitz 
constant are taken places later in
\cite{HMM08,SV05}.

By the notation $\partial\Omega$, we mean the boundary of $\Omega$.
The Hurwitz metric $\eta_\Omega(w)\,|dw|$ is defined by 
$$\eta_\Omega(w)=\frac{2}{r_\Omega(w)}, \quad r_\Omega(w)=\max\{f'(0):\, f\in\mathcal{H}_0^w(D,\Omega)\},
$$
where $\mathcal{H}_0^w(D,\Omega)$
stands for the collection of all holomorphic functions $f$ from $\mathbb{D}$ to $\Omega$ such 
that $f(0)=w$ and $f(s)\neq w$ for all $s\in\mathbb{D}^*$ and $ f'(0)> 0$. 
The collection of all holomorphic functions from $\mathbb{D}$ to $\Omega$ is
denoted by $\mathcal{H}(\mathbb{D},\Omega)$.
The existence of the extremal function for the Hurwitz metric is established by Minda in 
\cite[Theorem~4.1]{Min16}. 
Moreover, he proved that this extremal function is unique and called as the {\em Hurwitz covering map}. 
Note that the hyperbolic metric is always dominated by the Hurwitz metric.

According to \cite{LM12}, for any two sets $X$ and $Y$, the Hausdorff distance $H(X,Y)$ between them is defined as 
$$
H(X,Y):=\inf\{r>0:X\subset N_r(Y)\mbox{~and~} Y\subset N_r(X)\},
$$
where $N_r(X)$ denotes the collection of all points of $X$ whose Euclidean distance from $X$ is less than $r$. 
An equivalent definition of the Hausdorff distance is as follows:
$$
H(X,Y):=\max\big(\sup_{w\in X}d(w,Y),\sup_{w\in Y}d(w,X)\big),
$$
where $d(w,X):=\inf_{z\in X}\{|z-w|\}$.
Different types of convergence of a sequence of sets is studied in literature, for example the Carath\'eodory kernel convergence and 
convergence in boundary. For the sake of necessity in our results, the convergence in boundary is defined as:

\begin{definition}\cite{LM12}\label{def2.1}
Let $\Omega_n$ be a sequence of domains and $\Omega\subset\mathbb{C}$ be a domain. 
Then $\Omega_n$ is said to converge in boundary to $\Omega$ if
\begin{enumerate}
\item $H(\partial\Omega,\partial\Omega_n)\to 0$ as $n\to \infty$; and
\item there exits $w_0\in\Omega$ such that $w_0\in\Omega_n$ for all but finitely many $n\in\mathbb{N}$.
\end{enumerate} 
\end{definition}


\section{Continuity of the Hurwitz metric}\label{p3sec3}

Recently the continuity of the Hurwitz metric
in bounded planar domains is established by Sarkar and Verma (see \cite[Theorem~1.4]{SV20}) and in the same paper 
they made curvature calculation as well for arbitrary planar domains.
The idea of scaling principle for planar domains is adopted for these studies from \cite{GKK11}.
In this note, we provide an alternative and shorter proof of the continuity of the Hurwitz metric in 
arbitrary planar domains. The concept of the so-called convergence of sequence of domains in boundary is used 
in our investigation as a main tool (see the following lemma). However, this depends on the Hausdorff distance
(see Definition~\ref{def2.1}).

\begin{lemma}\label{lem1}
Let $\Omega$ be a planar domain. If $w_n\in\Omega$ is a sequence of points converging to a point 
$w\in\Omega$, then the corresponding sequence of punctured domains $\Omega_{w_n}=\Omega\setminus\{w_n\}$ 
converges in boundary to 
$\Omega_w=\Omega\setminus\{w\}$.  
\end{lemma}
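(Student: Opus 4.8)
The plan is to verify the two conditions of Definition~\ref{def2.1} directly. The crucial preliminary observation is that deleting an interior point of an open set adds precisely that point to the boundary: since $\Omega$ is a domain, every point of $\Omega$ is a non-isolated point of $\Omega$, so $w$ is a limit point of $\Omega\setminus\{w\}$ and hence $\overline{\Omega\setminus\{w\}}=\overline{\Omega}$. Because $\Omega\setminus\{w\}$ is open, it follows that
$$
\partial\Omega_w=\overline{\Omega\setminus\{w\}}\setminus(\Omega\setminus\{w\})=\big(\overline{\Omega}\setminus\Omega\big)\cup\{w\}=\partial\Omega\cup\{w\},
$$
and, by the same reasoning applied to each $w_n$, $\partial\Omega_{w_n}=\partial\Omega\cup\{w_n\}$.

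Granting this identity, condition (1) reduces to estimating $H(\partial\Omega\cup\{w\},\,\partial\Omega\cup\{w_n\})$. Using the $\max$-of-suprema form of the Hausdorff distance recalled in the Preliminaries, every point of $\partial\Omega$ lies in both sets and thus contributes distance $0$; the only points left to bound are $w$ and $w_n$, for which $d(w,\partial\Omega\cup\{w_n\})\le|w-w_n|$ and $d(w_n,\partial\Omega\cup\{w\})\le|w_n-w|$. Hence $H(\partial\Omega_w,\partial\Omega_{w_n})\le|w_n-w|\to 0$ as $n\to\infty$, which gives (1).

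For condition (2), I would fix any $w_0\in\Omega$ with $w_0\neq w$, which is possible because the open set $\Omega$ is infinite; then $w_0\in\Omega_w$. Since $w_n\to w$ and $w\neq w_0$, we have $|w_n-w_0|\ge|w_0-w|-|w_n-w|>0$ for all sufficiently large $n$, so $w_n\neq w_0$ and therefore $w_0\in\Omega_{w_n}$ for all but finitely many $n$. Combining this with the Hausdorff estimate above establishes that $\Omega_{w_n}$ converges in boundary to $\Omega_w$.

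The whole argument is elementary; the only step that requires a little care — and the one I would flag as the main (minor) obstacle — is the boundary identity $\partial(\Omega\setminus\{w\})=\partial\Omega\cup\{w\}$, since it relies on the fact that a planar domain has no isolated points, so that removing a single interior point neither enlarges nor shrinks the closure and does not erase any already-existing boundary point. Once this is in place, the Hausdorff bound and the base-point condition are immediate.
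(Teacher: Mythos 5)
Your proof is correct and follows essentially the same route as the paper: identify $\partial\Omega_{w_n}=\partial\Omega\cup\{w_n\}$ and $\partial\Omega_w=\partial\Omega\cup\{w\}$, bound the Hausdorff distance by $|w_n-w|$, and exhibit a common base point for condition (2). If anything, your version is slightly more careful — you justify the boundary identity explicitly and use the (correct) inequality $H(\partial\Omega_w,\partial\Omega_{w_n})\le|w_n-w|$ where the paper asserts equality, which need not hold when $w$ is closer to $\partial\Omega$ than to $w_n$.
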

\begin{proof}
To prove our lemma, we first demonstrate that the Hausdorff distance 
$H(\partial\Omega_w,\partial\Omega_{w_n})$ approaches zero as $n\to \infty$. Consider 
\begin{align*}
H(\partial\Omega_w,\partial\Omega_{w_n})
&=
\max\big(\sup_{w\in\partial\Omega_w}
d(w,\partial\Omega_{w_n}),\sup_{w\in \partial\Omega_{w_n}}d(w,\partial\Omega_w)\big)\\
&=
\max\big(|w-w_n|,|w_n-w|\big)\\
&= 
|w_n-w|,
\end{align*}
where the second equality follows from the simple observation that 
$\partial\Omega_{w_n}=\partial\Omega\cup\{w_n\}$ and $\partial\Omega_{w}=\partial\Omega\cup\{w\}$. 
Note that $w_n$ converges to $w$. Thus we conclude that the Hausdorff distance 
$H(\partial\Omega_w,\partial\Omega_{w_n})\to 0$ as $n\to\infty$.

Since $\Omega$ is a domain and $w\in\Omega$, we can choose a small ball $B(w,r)\subset\Omega$ and 
hence a point $w_0\in B(w,r)$ such that $w_0\neq w_n,w$ for $n\in\mathbb{N}$. 
As a consequence, $w_0$ belongs to $\Omega_w$ as well as $\Omega_{w_n}$. Hence, by the definition, we conclude 
that $\Omega_{w_n}$ converges in boundary to $\Omega_w$.  
\end{proof}


It is well known in literature that the classical hyperbolic metric \cite{ke07}, the 
Hahn metric \cite{Min83}, the capacity metric \cite{Sui73} are continuous functions. 
In this direction, as an application of Lemma~\ref{lem1},
we now show that the Hurwitz metric is also a continuous function. 

\begin{theorem}\label{thm1}
Let $\Omega\subsetneq\mathbb{C}$ be a domain, then the Hurwitz density $\eta_{\Omega}$ 
is a continuous function.
\end{theorem}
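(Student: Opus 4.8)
The plan is to prove continuity of $w\mapsto \eta_\Omega(w)$ at an arbitrary point $w\in\Omega$ by relating the Hurwitz density to the hyperbolic density of the punctured domain $\Omega_w=\Omega\setminus\{w\}$, and then invoking Lemma~\ref{lem1} together with the known continuity behaviour of the hyperbolic metric under convergence of domains in boundary. The key observation is that by the very definition of $\mathcal{H}_0^w(\mathbb{D},\Omega)$, any competitor $f$ omits the value $w$ on $\mathbb{D}^*$ and takes the value $w$ only at $0$; hence $f$ maps $\mathbb{D}^*$ into $\Omega_w$ and $0$ is a simple (one-to-one near $0$, since $f'(0)>0$) preimage of the puncture. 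Thus $r_\Omega(w)$ is, up to a universal normalisation, the derivative at the origin of the extremal map in the problem of mapping $\mathbb{D}$ into $\Omega$ with a first-order contact with the puncture at $w$; this extremal quantity is controlled by the hyperbolic density $\lambda_{\Omega_w}(\cdot)$ near $w$ in a quantitative way. Concretely, I would record the estimate that there are constants (coming from the local behaviour of the hyperbolic metric of a punctured domain near its puncture) comparing $\eta_\Omega(w)$ with a function of $\lambda_{\Omega_w}$ on small circles around $w$, uniformly for $w$ in a compact subset of $\Omega$.

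First I would fix $w\in\Omega$ and a sequence $w_n\to w$ in $\Omega$; by Lemma~\ref{lem1}, $\Omega_{w_n}\to\Omega_w$ in boundary. Next I would invoke the continuity of the hyperbolic metric under convergence in boundary (the hyperbolic density is continuous, as recalled in the paragraph before the theorem, and convergence in boundary forces local uniform convergence of the hyperbolic densities on compact subsets of the limit domain), to get $\lambda_{\Omega_{w_n}}\to\lambda_{\Omega_w}$ locally uniformly away from the punctures. Then I would transfer this to the Hurwitz densities: using the normal-families argument from Minda's existence proof \cite[Theorem~4.1]{Min16}, the extremal (Hurwitz covering) maps $f_n\in\mathcal{H}_0^{w_n}(\mathbb{D},\Omega)$ form a normal family, every locally uniform limit is a holomorphic map $\mathbb{D}\to\overline\Omega$ with $f(0)=w$, and one rules out degeneration (constant limit, or $f'(0)=0$) by the lower bound $r_\Omega(w_n)\ge c>0$ valid uniformly near $w$ (e.g. from an inscribed disc about $w$). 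Any such limit is then a competitor in $\mathcal{H}_0^{w}(\mathbb{D},\Omega)$, which combined with uniqueness of the Hurwitz covering map forces $f_n\to f_w$ locally uniformly and hence $r_\Omega(w_n)=f_n'(0)\to f_w'(0)=r_\Omega(w)$, i.e. $\eta_\Omega(w_n)\to\eta_\Omega(w)$.

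The two points requiring care — and the main obstacles — are (i) showing the limit map $f$ is \emph{admissible}, namely that it still omits $w$ on $\mathbb{D}^*$ (a priori a locally uniform limit could pick up the value $w$ off the origin, or could fail to be non-constant); and (ii) ensuring the limit map takes values in $\Omega$ rather than merely $\overline\Omega$, i.e. it does not slip into $\partial\Omega$. For (i) I would use Hurwitz's theorem on zeros together with the uniform lower bound on $r_\Omega(w_n)$ (so $f$ is non-constant and $f-w$ has an isolated zero of the right order at $0$), arguing that if $f(s_0)=w$ for some $s_0\in\mathbb{D}^*$ then for large $n$ the maps $f_n-w_n$ would have a zero near $s_0$, contradicting $f_n\in\mathcal{H}_0^{w_n}$. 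For (ii), since $w_n\to w\in\Omega$ and the convergence $\Omega_{w_n}\to\Omega_w$ in boundary controls $\partial\Omega$, an open-mapping plus maximum-principle argument (or a direct use that $f(\mathbb{D})$ is open and cannot meet $\partial\Omega$ while staying in the closure) gives $f(\mathbb{D})\subset\Omega$. Once admissibility and the target are secured, uniqueness of the extremal map closes the argument, and by the subsequence principle the full sequence $\eta_\Omega(w_n)$ converges to $\eta_\Omega(w)$, proving continuity.
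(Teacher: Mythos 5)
The decisive step of your argument has a genuine gap. The normal-families extraction, together with Hurwitz's theorem and the uniform lower bound $r_\Omega(w_n)\ge c>0$, shows that a locally uniform limit $f$ of the extremal maps $f_n\in\mathcal{H}_0^{w_n}(\mathbb{D},\Omega)$ is a nonconstant \emph{admissible competitor} in $\mathcal{H}_0^{w}(\mathbb{D},\Omega)$; but that only yields $f'(0)\le r_\Omega(w)$, hence $\limsup_n r_\Omega(w_n)\le r_\Omega(w)$, i.e.\ lower semicontinuity of $\eta_\Omega$ --- half the theorem. Your claim that admissibility ``combined with uniqueness of the Hurwitz covering map forces $f_n\to f_w$'' is unjustified: uniqueness says there is only one \emph{extremal} map for $w$, not that every admissible limit of extremals at nearby points is extremal. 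To identify $f$ with $f_w$ you must first prove $f'(0)=r_\Omega(w)$, which is precisely the complementary inequality $\liminf_n r_\Omega(w_n)\ge r_\Omega(w)$, and nothing in your sketch supplies it.

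The missing half is provable, but it needs a separate construction rather than another appeal to uniqueness: for $0<r<1$ put $g_{n,r}(z)=f_w(rz)+(w_n-w)$. Since $f_w(r\overline{\mathbb{D}})$ is a compact subset of $\Omega$, for all large $n$ one has $g_{n,r}\in\mathcal{H}_0^{w_n}(\mathbb{D},\Omega)$ with $g_{n,r}'(0)=r\,r_\Omega(w)$, so $\liminf_n r_\Omega(w_n)\ge r\,r_\Omega(w)$ for every $r<1$; letting $r\to 1$ closes the argument. With this inserted your proof works, and it is then essentially the scaling/normal-families route of Sarkar--Verma rather than the paper's, which instead solves Minda's asymptotic expansion of $\lambda_{\Omega_{w_n}}$ near the puncture for $\eta_\Omega(w_n)$ and passes to the limit using Lemma~\ref{lem1} and the Lakic--Markowsky convergence theorem. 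A smaller point: your open-mapping/maximum-principle argument for keeping the limit inside $\Omega$ breaks down at isolated boundary points (where $\partial\Omega$ lies in the interior of $\overline{\Omega}$); the clean argument is again Hurwitz's theorem applied to $f_n-p$ for each fixed $p\in\Omega^c$.
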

\begin{proof}
Let $w\in\Omega$ be an arbitrary point and $w_n$ be a sequence of points in $\Omega$ converging to $w$. Then we show that $\eta_{\Omega}(w_n)\to\eta_{\Omega}(w)$ as $n\to\infty$. 
By \cite[Theorem~7.1]{Min16}, we have
$$
|z-w_n|\log\left(\cfrac{2}{\eta_{\Omega}(w_n)|z-w_n|}\right)\lambda_{\Omega_{w_n}}(z)
=1+O\left(\cfrac{|z-w_n|}{\log\left(\cfrac{2}{\eta_{\Omega}(w_n)|z-w_n|}\right)}\right).
$$
To solve the above equation for $\eta_{\Omega_n}$, we do the following calculations$:$
\begin{align}
\cfrac{1}{|z-w_n|\lambda_{\Omega_{w_n}}(z)}
&=\cfrac{\log\left(\cfrac{2}{\eta_{\Omega}(w_n)|z-w_n|}\right)}{1+O\left(\cfrac{|z-w_n|}{\log\left(\cfrac{2}{\eta_{\Omega}(w_n)|z-w_n|}\right)}\right)}\nonumber\\
&=\log\left(\cfrac{2}{\eta_{\Omega}(w_n)|z-w_n|}\right)\left(1-O\left(\cfrac{|z-w_n|}{\log\left(\cfrac{2}{\eta_{\Omega}(w_n)|z-w_n|}\right)}\right)\right)\nonumber\\
&=\log\left(\cfrac{2}{\eta_{\Omega}(w_n)}\right)-\log(|z-w_n|)-O(|z-w_n|)\nonumber.
\end{align}
This implies that
$$
\log\left(\cfrac{\eta_{\Omega}(w_n)}{2}\right)
=\cfrac{1}{\log(|z-w_n|)}-\cfrac{1}{|z-w_n|\lambda_{\Omega_{w_n}}(z)}-O(|z-w_n|).
$$

Let $f(z)=O(|z-w_n|)$ as $z\to w_n$. By definition of big $O$, there exist $M,\epsilon>0$ such that for all $z$ with $0<|z-w_n|<\epsilon$ we have $|f(z)|\le M|z-w_n|$. Since $w_n\to w$, for $\delta>0$ there exists a number $N\in\mathbb{N}$ such that $|w_n-w|<\delta$ for all $n\ge N$. Using the triangle inequality $|z-w|\le|z-w_n|+|w_n-w|$ and choosing $\delta=\epsilon$ we have $|f(z)|\le M|z-w|$ for all $z$ with $0<|z-w|<2\epsilon$. Therefore, we conclude that $O(|z-w_n|)\to O(|z-w|)$ as $n\to \infty$.

Recall that the logarithm function and $1/|z-w_n|$ are continuous. In view of \cite[Lemma~2]{LM12},  $\lambda_{\Omega_{w_n}}$ 
converges locally uniformly to $\lambda_{\Omega_w}$ whenever the sequence of domains $\Omega_{w_n}$ 
converges in boundary to $\Omega_w$. These facts along with Lemma \ref{lem1} give us 
$$
\lim_{n\to\infty}\log\cfrac{\eta_{\Omega}(w_n)}{2}
=\log\cfrac{1}{|z-w|}-\cfrac{1}{|z-w|\lambda_{\Omega_w}(z)}-O(|z-w|).
$$
Therefore, the continuity of the Hurwitz metric follows.
\end{proof}
 
 
\section{An invariant metric}
Minda introduced the Hurwitz metric and studied the bi-Lipschitz equivalence of the Hurwitz metric with the quasihyperbolic metric (see \cite{Min16}). Note that the quasihyperbolic metric is not M\"obius invariant. A natural question arises:{\em  does there exist a M\"obius invariant metric which is bi-Lipschitz equivalent to the Hurwitz metric?} This question motivates us to define a new metric which is obtained by refining the Hurwitz metric in the sense of the Gardiner-Lakic metric. Indeed, we show that this new metric is invariant under M\"obius maps that fix $\infty$ and is bi-Lipschitz equivalent to the Hurwitz metric. As a consequence, on hyperbolic domain with uniformly perfect boundary, the Gardiner-Lakic version of the Hurwitz metric and the classical hyperbolic metric are also bi-Lipschitz equivalent. Some other important basic properties of this new metric are also studied in this section.  
 
Let $\Omega\subsetneq\mathbb{C}$ be a domain. For $w\in\Omega$, the {\em quasihyperbolic density} is defined by $1/\delta_\Omega(w)$, where $\delta_\Omega(w)=\inf_{p\in\Omega^c}|w-p|$, the distance of $w$ to the boundary of $\Omega$. Note that the Hurwitz metric is bi-Lipschitz equivalent to the quasihyperbolic metric on proper domains of the complex plane. It is quite interesting to study the quasihyperbolic metric in the Gardiner-Lakic sense, which we define as follows: 
$$
\cfrac{1}{\overline{\delta}_\Omega(w)}=\sup\cfrac{1}{\delta_{\mathbb{C}\setminus\{w_1,w_2\}}(w)},
$$
where the supremum is taken over all {\em distinct} pair of points $w_1,w_2\in\Omega^c.$
However, surprisingly, we demonstrate here that $1/\overline{\delta_\Omega}$ and $1/\delta_\Omega$ agree on the hyperbolic domains. Indeed, we have
$$
\cfrac{1}{\overline{\delta}_\Omega(w)}
=\sup_{w_1,w_2\in \Omega^c}\cfrac{1}{\delta_{\mathbb{C}\setminus\{w_1,w_2\}}(w)}
=\sup_{w_1,w_2\in \Omega^c}\left\{\cfrac{1}{|w-w_1|},\cfrac{1}{|w-w_2|}\right\}
=\sup_{w_1\in \Omega^c}\cfrac{1}{|w-w_1|}=\cfrac{1}{\delta_\Omega}.
$$
This justifies the introduction of the Gardiner-Lakic version of the Hurwitz metric in the sequel instead  of the quasihyperbolic metric. 

For a hyperbolic domain $\Omega$ and $w\in\Omega$, setting 
$$
\overline{\eta}_\Omega(w)=\sup\eta_{\mathbb{C}\setminus\{w_1,w_2\}}(w),
$$
where the supremum is taken over all {\em distinct} pair of points $w_1,w_2\in\Omega^c.$
Unfortunately, our new metric $\overline{\eta}_\Omega$ is also difficult to compute in view of the nature of the Hurwitz metric in twice punctured planes. 

\begin{remark}
For a hyperbolic domain $\Omega$ and $w_1,w_2\in\Omega^c$, it is easy to see that 
$\Omega$ is contained in $\mathbb{C}\setminus\{w_1,w_2\}$. By the domain monotonicity property of 
the Hurwitz metric, it follows that $\eta_{\mathbb{C}\setminus\{w_1,w_2\}}(w)\leq\eta_{\Omega}(w)$ 
for all $w\in\Omega$ and hence on taking supremum over all $w_1,w_2\in\Omega$, we 
have $\overline{\eta}_\Omega(w)\leq\eta_{\Omega}(w)$ for all $w\in\Omega$.
\end{remark}
\begin{remark}
Suppose $\Omega_1\subset\Omega_2$ are hyperbolic domains, then $\Omega_2^c\subset\Omega_1^c$. 
Therefore, by the definition of $\overline{\eta}$, we have $\overline{\eta}_{\Omega_1}\geq\overline{\eta}_{\Omega_2}$. 
That is, $\overline{\eta}$ also satisfies the {\em domain monotonicity property}.  
\end{remark}

In the definition of $\overline{\eta}_\Omega$, the supremum is always attained for a pair of points 
in the complement of $\Omega$. We now illustrate this in the form of the following lemma.

\begin{lemma}\label{p3lem1}
Let $\Omega$ be a hyperbolic domain. For every $w\in\Omega$ there exist distinct points $p,q\in\Omega^c$ 
for which supremum is attained for $\overline{\eta}_\Omega(w)$, that is $\overline{\eta}_\Omega(w)
=\eta_{\mathbb{C}\setminus\{p,q\}}(w)$. 
\end{lemma}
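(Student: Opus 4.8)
The plan is to realize $\overline{\eta}_\Omega(w)$ as the supremum of a function over a compact set and then invoke continuity to get that the supremum is attained. Fix $w\in\Omega$. For a pair of distinct points $(a,b)$ with $a,b\in\Omega^c$, set $F(a,b)=\eta_{\mathbb{C}\setminus\{a,b\}}(w)$; by definition $\overline{\eta}_\Omega(w)=\sup F(a,b)$ over this set of pairs. The first step is to reduce to a bounded situation. If $\Omega^c$ is unbounded (in particular if $\infty$ is an allowed "point" of the complement in the spherical picture, or simply if $\Omega^c$ is an unbounded subset of $\mathbb{C}$), one must check that pairs $(a,b)$ with $|a|$ or $|b|$ large do not help: intuitively, pushing a puncture off to $\infty$ only enlarges the twice-punctured plane toward a once-punctured plane, and since $w$ is at positive quasihyperbolic distance from $\partial\Omega$, domain monotonicity of $\eta$ gives a uniform bound. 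Concretely I would argue that for $a,b$ outside a large disk $\overline{B(0,R)}$ (with $R$ depending only on $w$ and some fixed reference puncture $a_0\in\Omega^c$), $F(a,b)$ is bounded above by something strictly less than $\sup F$; hence the supremum is unchanged if we restrict to $a,b\in K:=\Omega^c\cap\overline{B(0,R)}$, a compact set. One also fixes a small closed disk around $w$ excluded, since $a,b\ne w$, but that is automatic as $w\in\Omega$ so $\delta_\Omega(w)>0$ separates $w$ from $\Omega^c$.

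The second step is continuity of $F$ on the compact set $\{(a,b)\in K\times K:\ a\ne b\}$. For this I would use Theorem~\ref{thm1}: the Hurwitz density $\eta_{D}$ depends continuously on the point, and combining with Lemma~\ref{lem1}-type reasoning (convergence of twice-punctured planes $\mathbb{C}\setminus\{a_n,b_n\}$ to $\mathbb{C}\setminus\{a,b\}$ in boundary when $a_n\to a$, $b_n\to b$ with $a\ne b$, together with the locally uniform convergence of the corresponding metrics invoked via \cite[Lemma~2]{LM12} in the proof of Theorem~\ref{thm1}), one gets $F(a_n,b_n)\to F(a,b)$. So $F$ is continuous on $K\times K$ off the diagonal.

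The third step handles the diagonal, which is the main obstacle. The set $\{(a,b)\in K\times K:\ a\ne b\}$ is not compact — a maximizing sequence could have $a_n,b_n\to c$ for a single $c\in\Omega^c$. Here I would show that the diagonal cannot be where the supremum lives: as $b\to a$, the twice-punctured plane $\mathbb{C}\setminus\{a,b\}$ degenerates, and $\eta_{\mathbb{C}\setminus\{a,b\}}(w)$ stays bounded — in fact converges (or is dominated by) $\eta_{\mathbb{C}\setminus\{a\}}(w)$, the Hurwitz density of the once-punctured plane at $w$, which equals $2/\delta$ up to constants and is strictly smaller than values attainable by genuinely separated pairs. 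More carefully, one uses the Remark above: for every pair $\overline{\eta}_\Omega(w)\ge F(a,b)$ and also $\overline{\eta}_\Omega(w)\le\eta_\Omega(w)$; the point is that there exist \emph{some} pair for which $F(a,b)$ exceeds the degenerate-limit value, so a maximizing sequence can be chosen to stay a definite distance off the diagonal, say $|a_n-b_n|\ge\varepsilon_0>0$. Then $(a_n,b_n)$ ranges in the genuinely compact set $\{(a,b)\in K\times K:\ |a-b|\ge\varepsilon_0\}$, and a convergent subsequence $a_n\to p$, $b_n\to q$ with $p\ne q$ combined with continuity of $F$ from step two yields $F(p,q)=\sup F=\overline{\eta}_\Omega(w)$, with $p,q\in\Omega^c$ since $\Omega^c$ is closed. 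This produces the desired extremal pair.

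Finally, I would remark that in the spherical (Möbius-invariant) formulation one can first normalize by a Möbius map so that $\infty\in\Omega^c$ is one of the allowed punctures, or simply carry the whole argument on the sphere with the chordal metric, where $\widehat{\mathbb{C}}\setminus\Omega$ is already compact and the reduction in step one is unnecessary; the continuity input and the off-diagonal argument are identical. The only genuinely delicate point throughout is controlling the degenerate limits $b\to a$ and $a,b\to\infty$, i.e.\ confirming that neither degeneration can realize the supremum, and for that the domain monotonicity of $\eta$ together with the known comparison $\eta_{\mathbb{C}\setminus\{a\}}\asymp 2/\delta$ on once-punctured planes does the job.
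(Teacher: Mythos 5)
Your overall strategy --- restrict the pair $(a,b)$ to a compact subset of $\Omega^c\times\Omega^c$, prove continuity of $F(a,b)=\eta_{\mathbb{C}\setminus\{a,b\}}(w)$ off the diagonal, and rule out degenerate maximizing sequences --- is genuinely different from the paper's. The paper instead takes a maximizing sequence $(p_n,q_n)$, passes to limits $p,q\in\Omega^c$, and uses convergence of the associated Hurwitz covering maps together with Hejhal's theorem on degenerating covering maps to conclude simultaneously that $p\neq q$ and that $\eta_{\mathbb{C}\setminus\{p_n,q_n\}}(w)\to\eta_{\mathbb{C}\setminus\{p,q\}}(w)$. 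Your framework is more systematic (and, unlike the paper, at least acknowledges the possibility of the punctures escaping to infinity), but it has a genuine gap exactly at the point you flag as delicate: the exclusion of degenerate limits. To keep a maximizing sequence off the diagonal you need something like $\limsup_{b\to a}\eta_{\mathbb{C}\setminus\{a,b\}}(w)\le\eta_{\mathbb{C}\setminus\{a\}}(w)=1/(8|w-a|)$, and similarly when one puncture tends to $\infty$. The tools you cite do not deliver this. Domain monotonicity gives the inequality in the \emph{wrong} direction, namely $\eta_{\mathbb{C}\setminus\{a,b\}}(w)\ge\eta_{\mathbb{C}\setminus\{a\}}(w)$ (your parenthetical ``dominated by'' is backwards), while the comparison $\eta\asymp 1/\delta$ only yields the upper bound $\eta_{\mathbb{C}\setminus\{a,b\}}(w)\le 2/\min(|w-a|,|w-b|)$, which tends to $2/|w-a|$. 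Since the supremum $\overline{\eta}_\Omega(w)$ is in general only guaranteed to exceed $1/(8\delta_\Omega(w))$, and $2/|w-a|$ can be as large as $2/\delta_\Omega(w)$, these crude bounds leave a factor of $16$ and cannot show that near-diagonal or near-infinity pairs do strictly worse than the supremum. Hence the step ``a maximizing sequence can be chosen with $|a_n-b_n|\ge\varepsilon_0$'' is asserted but not proved, and it is precisely the heart of the lemma.

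The gap is repairable within your framework, but it requires an actual upper semicontinuity argument for $\eta$ under the degenerating domain convergence $\mathbb{C}\setminus\{a_n,b_n\}\to\mathbb{C}\setminus\{a\}$. For instance: if $g$ is the extremal function for $r_{\mathbb{C}\setminus\{a\}}(w)$, then for fixed $\rho<1$ the compact set $g(\overline{\rho\mathbb{D}})$ has positive distance from $a$, so $z\mapsto g(\rho z)$ is an admissible competitor for $\mathbb{C}\setminus\{a,b\}$ once $|b-a|$ is small, giving $r_{\mathbb{C}\setminus\{a,b\}}(w)\ge\rho\, r_{\mathbb{C}\setminus\{a\}}(w)$ and hence the required limsup bound as $\rho\to1$; alternatively one can route the estimate through Minda's Theorem~7.1 and \cite[Lemma~2]{LM12} as in Theorem~\ref{thm1}, or simply invoke Hejhal's degeneration theorem as the paper does. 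You also need strict domain monotonicity of $\eta$ to guarantee $\overline{\eta}_\Omega(w)>1/(8\delta_\Omega(w))$, so that the degenerate limiting value is strictly below the supremum; this should be stated explicitly. With those two ingredients supplied, your compactness-plus-continuity argument closes correctly.
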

\begin{proof}
Fix $w_0\in\Omega$. Let $p_n\neq q_n\in\Omega^c$ be sequences such that $\eta_{\mathbb{C}
\setminus\{p_n,q_n\}}(w_0)\to\overline{\eta}_\Omega(w_0)$. This is possible by the definition of supremum. Since $\Omega^c$ is a closed set, 
limit of the sequences $p_n,q_n$ say $p$ and $q$ belong to $\Omega^c$ itself. For $w_0\in\Omega$, 
let $g_n$ and $g$ be the Hurwitz covering maps from $\mathbb{D}\to\mathbb{C}\setminus\{p_n,q_n\}$ 
and $\mathbb{D}\to\mathbb{C}\setminus\{p,q\}$, respectively. 

Note that the conformal map $T_n(w)=[(w-p)(q_n-p_n)/(q-p)]+p_n$ maps
$\mathbb{C}\setminus\{p,q\}$ onto $\mathbb{C}\setminus\{p_n,q_n\}$. It is easy to see that 
the composition function $T_n\circ g$ from $\mathbb{D}$ onto $\mathbb{C}\setminus\{p_n,q_n\}$ 
is a Hurwitz covering map. By the uniqueness of the Hurwitz covering map, we compute
\begin{equation}\label{p3eq3}
g_n(w_0)=(T_n\circ g)(w_0)=\left(\cfrac{g(w_0)-p}{q-p}\right)(q_n-p_n)+p_n.
\end{equation}
Taking $n\to\infty$ in \eqref{p3eq3}, the limit $g_n\to g$ follows. Also, the restricted holomorphic covering map $g_n:\mathbb{D}\setminus\{0\}\to\mathbb{C}\setminus\{p_n,q_n,w_0\}$ 
converges $g:\mathbb{D}\setminus\{0\}\to\mathbb{C}\setminus\{p,q,w_0\}$. 
Hence the points $p,q$ are distinct; otherwise it converges locally uniformly to $w_0$ times the rotation map of $\mathbb{D}\setminus\{0\}$ \cite{Hej74}. Now, the result follows from the uniqueness of the limit.
\end{proof}

It is always interesting to know whether a metric is M\"obius invariant. In literature many metrics 
such as the hyperbolic metric \cite{ke07}, the Hurwitz metric \cite{Min16}, the Kobayashi density of the 
Hurwitz metric \cite{AS1}, the Carath\'eodory density of the Hurwitz metric \cite{AS2} and the Gardiner-Lakic 
metric \cite{GL01} are M\"obius invariant. Precisely, M\"obius invariance of the Kobayashi density of the Hurwitz metric follows by applying a M\"obius map and its inverse in \cite[Theorem~3.11]{AS1}. We now demonstrate that $\overline{\eta}_\Omega$ is 
M\"obius invariant when it fixes $\infty$.

Analogue to the definition of the hyperbolic metric in hyperbolic domains of the Riemann sphere, it is easy to extend the notion of the Hurwitz metric on any proper subdomain of the Riemann sphere whose complement contains at least two points. As a consequence, the $\bar{\eta}$ metric can be extended to the hyperbolic domains in the Riemann sphere. We now demonstrate that $\overline{\eta}_\Omega$ is invariant under M\"obius maps which fixes the point infinity (affine maps). However, following the steps of Lemma~\ref{p3lem2}, M\"obius Invariance of $\overline{\eta}_\Omega$ can be guaranteed whenever $\Omega$ is a hyperbolic domain in Riemann sphere. 	

\begin{lemma}\label{p3lem2}
The metric $\overline{\eta}_\Omega$ is invariant under M\"obius transformations which fix infinity. 
\end{lemma}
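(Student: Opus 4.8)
The plan is to reduce M\"obius invariance of $\overline{\eta}_\Omega$ under affine maps to the known M\"obius invariance of the Hurwitz metric $\eta$ itself, exploiting that an affine map $\phi(w)=\alpha w+\beta$ ($\alpha\neq 0$) carries a twice-punctured plane onto a twice-punctured plane. First I would fix a hyperbolic domain $\Omega$, a point $w\in\Omega$, and an affine M\"obius transformation $\phi$ with $\phi(\infty)=\infty$. Write $\Omega'=\phi(\Omega)$; then $(\Omega')^c=\phi(\Omega^c)$ because $\phi$ is a homeomorphism of $\mathbb{C}$ fixing $\infty$, and in particular $\phi$ induces a bijection between unordered distinct pairs $\{w_1,w_2\}\subset\Omega^c$ and unordered distinct pairs $\{\phi(w_1),\phi(w_2)\}\subset(\Omega')^c$. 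For each such pair, $\phi$ restricts to a conformal bijection $\mathbb{C}\setminus\{w_1,w_2\}\to\mathbb{C}\setminus\{\phi(w_1),\phi(w_2)\}$, and since the Hurwitz metric is M\"obius invariant (\cite{Min16}), its density transforms by the usual rule
$$
\eta_{\mathbb{C}\setminus\{\phi(w_1),\phi(w_2)\}}(\phi(w))\,|\phi'(w)|=\eta_{\mathbb{C}\setminus\{w_1,w_2\}}(w).
$$

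Next I would take the supremum over all admissible pairs on both sides. Because the correspondence $\{w_1,w_2\}\mapsto\{\phi(w_1),\phi(w_2)\}$ is a bijection between the index sets of the two suprema and the factor $|\phi'(w)|$ is a positive constant independent of the pair (here is where $\phi'(w)=\alpha$ being constant, i.e. $\phi$ affine, is essential), the supremum on the left equals $\overline{\eta}_{\Omega'}(\phi(w))\,|\phi'(w)|$ and the supremum on the right equals $\overline{\eta}_\Omega(w)$. This yields
$$
\overline{\eta}_{\phi(\Omega)}(\phi(w))\,|\phi'(w)|=\overline{\eta}_\Omega(w)\quad\text{for all }w\in\Omega,
$$
which is precisely the statement that $\overline{\eta}_\Omega$ is invariant under affine M\"obius maps. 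Lemma~\ref{p3lem1} guarantees the suprema are actually attained, which lets one phrase the argument with maxima if desired, though it is not strictly needed.

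The only real subtlety, and the step I would treat most carefully, is the bookkeeping that the set of competitors genuinely matches up under $\phi$: one must check that $\phi$ maps $\Omega^c$ \emph{onto} $(\Omega')^c$ (not merely into it) and preserves distinctness of pairs, so that neither supremum ranges over a strictly larger family than the other; any asymmetry there would only give an inequality. This is immediate from $\phi$ being a bijection of the plane, but it is worth stating explicitly. A secondary point worth a remark is why the argument breaks for a general M\"obius map not fixing $\infty$: such a map sends $\infty$ to a finite point, so a twice-punctured plane need no longer go to a twice-punctured plane within $\mathbb{C}$, and $|\phi'(w)|$ is no longer constant in the pair — this is exactly why the full M\"obius-invariant statement is deferred to the Riemann-sphere setting mentioned before the lemma, where one re-runs the same scheme with $\eta$ defined on spherical twice-punctured domains.
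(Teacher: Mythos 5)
Your proposal is correct and follows essentially the same route as the paper: reduce to the transformation rule for the Hurwitz density on twice-punctured planes under the affine map, then pass to the supremum over pairs of complementary points. The only cosmetic difference is that you argue the two suprema coincide directly via the bijection of admissible pairs, whereas the paper derives one inequality and obtains the reverse by applying the same argument to $T^{-1}$; both handle the surjectivity point you flag, though note that in your closing remark the quantity $|\phi'(w)|$ never depends on the pair $\{w_1,w_2\}$ for any M\"obius map --- the genuine obstruction for non-affine maps is only that $\mathbb{C}\setminus\{w_1,w_2\}$ is not carried onto a twice-punctured plane.
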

\begin{proof}
Suppose that $T$ is a M\"obius transformation 
and $T(\Omega_1)=\Omega_2$, where $\Omega_1$ is a hyperbolic domain. In the definition of $\overline{\eta}_\Omega$,
since $w_1,w_2\in\Omega_1^c$, 
their images under $T$ belong to $\Omega_2^c$, that is $T(w_1)=s_1,T(w_2)=s_2\in\Omega_2^c$ 
and $s_1\neq s_2$. In order to prove that $\overline{\eta}_\Omega$ is M\"obius invariant, it is enough to show that
 $$
\overline{\eta}_{\Omega_2}(T(w))|T'(w)|=\overline{\eta}_{\Omega_1}(w). 
 $$
 As a consequence of distance decreasing property of the Hurwitz density we have
 $$
 \eta_{\mathbb{C}\setminus\{s_1,s_2\}}(T(w))|T'(w)|=\eta_{\mathbb{C}\setminus\{w_1,w_2\}}(w). 
 $$
 By the definition of $\overline{\eta}_{\Omega_2}$, it follows that 
 $$
 \overline{\eta}_{\Omega_2}(T(w))|T'(w)|\geq\eta_{\mathbb{C}\setminus\{w_1,w_2\}}(w). 
 $$
 Therefore, we have $\overline{\eta}_{\Omega_2}(T(w))|T'(w)|\geq\overline{\eta}_{\Omega_1}(w)$. 
 The reverse inequality is guaranteed by applying the same argument to $T^{-1}$.
 \end{proof}
    

In Theorem~\ref{thm1}, we proved the continuity of the Hurwitz metric. Our next result 
demonstrates the lower semi-continuity of the $\overline{\eta}_\Omega$ metric by 
using the fact that the Hurwitz metric is continuous. We are looking forward to establish the continuity of $\overline{\eta}_\Omega$, however, at present we do not have a proof for it.

\begin{theorem}
For a hyperbolic domain $\Omega$, the density function $\overline{\eta}_\Omega$ is a lower 
semi-continuous function.
\end{theorem}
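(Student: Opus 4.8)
The plan is to prove lower semi-continuity directly from the definition: I want to show that for every $w_0\in\Omega$ and every sequence $w_n\to w_0$ in $\Omega$, one has $\liminf_{n\to\infty}\overline{\eta}_\Omega(w_n)\geq\overline{\eta}_\Omega(w_0)$. First I would invoke Lemma~\ref{p3lem1} to pick distinct points $p,q\in\Omega^c$ for which the supremum defining $\overline{\eta}_\Omega(w_0)$ is attained, so that $\overline{\eta}_\Omega(w_0)=\eta_{\mathbb{C}\setminus\{p,q\}}(w_0)$. The key observation is that the \emph{same} pair $(p,q)$ remains admissible for every $w_n$, since $p,q\in\Omega^c$ does not depend on the base point; hence by the definition of $\overline{\eta}_\Omega$ as a supremum we get $\overline{\eta}_\Omega(w_n)\geq\eta_{\mathbb{C}\setminus\{p,q\}}(w_n)$ for every $n$.

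Next I would apply Theorem~\ref{thm1}, the continuity of the Hurwitz metric, to the \emph{fixed} hyperbolic domain $\mathbb{C}\setminus\{p,q\}$. Since $w_n\to w_0$ and $w_0\in\mathbb{C}\setminus\{p,q\}$ (because $w_0\in\Omega$), continuity gives $\eta_{\mathbb{C}\setminus\{p,q\}}(w_n)\to\eta_{\mathbb{C}\setminus\{p,q\}}(w_0)=\overline{\eta}_\Omega(w_0)$. Combining this with the inequality from the previous paragraph,
$$
\liminf_{n\to\infty}\overline{\eta}_\Omega(w_n)\;\geq\;\lim_{n\to\infty}\eta_{\mathbb{C}\setminus\{p,q\}}(w_n)\;=\;\overline{\eta}_\Omega(w_0),
$$
which is exactly lower semi-continuity at $w_0$. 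Since $w_0$ was arbitrary, $\overline{\eta}_\Omega$ is lower semi-continuous on $\Omega$.

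I expect no serious obstacle here: the argument is the standard "supremum of continuous functions is lower semi-continuous" principle, with the only subtlety being that the index set of the supremum (pairs $p,q\in\Omega^c$) does not vary with the point $w$, which is precisely what makes the fixed-pair estimate legitimate, and that Lemma~\ref{p3lem1} guarantees the supremum is actually attained (so I can use "$=$" rather than an $\varepsilon$-approximation, though the latter would also work). The one point to state carefully is that $\mathbb{C}\setminus\{p,q\}$ is itself a hyperbolic domain and $w_0$ together with all but finitely many $w_n$ lie in it, so Theorem~\ref{thm1} genuinely applies; this is immediate since $\Omega\subset\mathbb{C}\setminus\{p,q\}$. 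The absence of a matching upper semi-continuity bound — and hence of full continuity — is exactly the phenomenon the remark preceding the theorem alludes to, because the attaining pair $(p,q)$ can jump as $w$ moves, so there is no obvious control of $\eta_{\mathbb{C}\setminus\{p_n,q_n\}}(w_n)$ from above by $\overline{\eta}_\Omega(w_0)$.
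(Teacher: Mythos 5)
Your proposal is correct and follows essentially the same route as the paper: fix the extremal pair $p,q\in\Omega^c$ from Lemma~\ref{p3lem1}, use the fixed-pair lower bound $\overline{\eta}_\Omega(w_n)\geq\eta_{\mathbb{C}\setminus\{p,q\}}(w_n)$, and pass to the limit via the continuity of the Hurwitz density (Theorem~\ref{thm1}) on the fixed domain $\mathbb{C}\setminus\{p,q\}$. Your remarks on why upper semi-continuity does not follow the same way are accurate but not needed for the proof.
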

\begin{proof}
Let $w_0\in\Omega$ be a fixed point. To prove the lower semi-continuity, we shall demonstrate that 
$$
\overline{\eta}_\Omega(w_0)\leq\liminf_{w\to w_0}\overline{\eta}_\Omega(w).
$$
Choose a sequence $w_n\in\Omega$ converging to the point $w_0$ such that 
$\overline{\eta}_\Omega(w_n)\to\liminf_{w\to w_0}\overline{\eta}_\Omega(w)$. 
From Lemma \ref{p3lem1}, there exit points $p,q\in\Omega^c$ with $\overline{\eta}_\Omega(w_0)
=\eta_{\mathbb{C}\setminus\{p,q\}}(w_0)$. By the definition of $\overline{\eta}_\Omega$ 
it follows that $\eta_{\mathbb{C}\setminus\{p,q\}}(w_n)\leq\overline{\eta}_\Omega(w_n)$ 
for all $n\in\mathbb{N}$. Thus, we have
$$
\overline{\eta}_\Omega(w_0)
=\eta_{\mathbb{C}\setminus\{p,q\}}(w_0)
=\lim_{n\to\infty}\eta_{\mathbb{C}\setminus\{p,q\}}(w_n)
\leq\liminf_{n\to\infty}\overline{\eta}_\Omega(w_n)
\leq\liminf_{w\to w_0}\overline{\eta}_\Omega(w),
$$
where the second equality follows from Theorem~\ref{thm1}.
\end{proof}


\section{Bi-Lipschitz properties}
Minda proved in \cite[Theorem~6.4]{Min16} that the Hurwitz metric and the quasihyperbolic metric are bi-Lipschitz in any proper subdomain of the complex plane. The following result provides a bi-Lipschitz equivalence of the $\overline{\eta}_\Omega$ and the quasihyperbolic densities in a 
hyperbolic domain $\Omega$. 

\begin{theorem}\label{p3eq2}
Let $\Omega$ be a hyperbolic domain, then the quasihyperbolic density $\delta_\Omega$ and the 
density $\overline{\eta}_\Omega$ are bi-Lipschitz equivalent, that is 
$$
\cfrac{1}{8\delta_\Omega(w)}\leq\overline{\eta}_\Omega(w)\leq\cfrac{2}{\delta_\Omega(w)},
$$
for every  $w\in\Omega$. 
\end{theorem}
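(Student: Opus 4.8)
The plan is to deduce the bi-Lipschitz bounds for $\overline{\eta}_\Omega$ from the known comparison between the Hurwitz metric and the quasihyperbolic metric, together with two features established earlier: the supremum defining $\overline{\eta}_\Omega$ is attained (Lemma~\ref{p3lem1}), and the quasihyperbolic density itself is unchanged under the Gardiner--Lakic refinement, i.e. $1/\overline{\delta}_\Omega = 1/\delta_\Omega$, which was verified directly in the text.

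For the upper bound, fix $w\in\Omega$ and let $p,q\in\Omega^c$ be the distinct points with $\overline{\eta}_\Omega(w)=\eta_{\mathbb{C}\setminus\{p,q\}}(w)$ from Lemma~\ref{p3lem1}. On the twice-punctured plane $\mathbb{C}\setminus\{p,q\}$ one has a clean estimate for the Hurwitz density: the map $z\mapsto w + \delta(z)$ with $\delta$ small enough (more precisely, using the disk $B(w,\delta_{\mathbb{C}\setminus\{p,q\}}(w))$ and the rescaled identity as a competitor in $\mathcal{H}_0^w$) forces $r_{\mathbb{C}\setminus\{p,q\}}(w)\ge \delta_{\mathbb{C}\setminus\{p,q\}}(w)$, hence $\eta_{\mathbb{C}\setminus\{p,q\}}(w)\le 2/\delta_{\mathbb{C}\setminus\{p,q\}}(w)$. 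Since $\delta_{\mathbb{C}\setminus\{p,q\}}(w)=\min\{|w-p|,|w-q|\}\ge \delta_\Omega(w)$ is false in general — rather $\delta_{\mathbb{C}\setminus\{p,q\}}(w)\ge\delta_\Omega(w)$ because $\Omega^c\supset\{p,q\}$ — we get $\eta_{\mathbb{C}\setminus\{p,q\}}(w)\le 2/\delta_\Omega(w)$, which is exactly $\overline{\eta}_\Omega(w)\le 2/\delta_\Omega(w)$. (One must double-check the direction of the inequality between $\delta_{\mathbb{C}\setminus\{p,q\}}(w)$ and $\delta_\Omega(w)$; since removing more points shrinks the domain, $\delta$ can only decrease, so actually $\delta_{\mathbb{C}\setminus\{p,q\}}(w)\le\delta_\Omega(w)$ is wrong too — the correct statement is that $\{p,q\}\subset\Omega^c$ gives $\delta_{\mathbb{C}\setminus\{p,q\}}(w)=\min\{|w-p|,|w-q|\}$ and choosing $p,q$ near-optimal for $\delta_\Omega$ makes this $\to\delta_\Omega(w)$ from above, while for the attaining pair the displayed identity $1/\overline{\delta}_\Omega=1/\delta_\Omega$ already tells us the relevant infimum is $\delta_\Omega(w)$.) I would therefore route the upper bound through $\overline{\eta}_\Omega(w)=\sup_{w_1,w_2}\eta_{\mathbb{C}\setminus\{w_1,w_2\}}(w)\le \sup_{w_1,w_2} 2/\delta_{\mathbb{C}\setminus\{w_1,w_2\}}(w)=2/\overline{\delta}_\Omega(w)=2/\delta_\Omega(w)$, taking the supremum before invoking the identity, which sidesteps the sign issue entirely.

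For the lower bound, I want $\overline{\eta}_\Omega(w)\ge \tfrac{1}{8\delta_\Omega(w)}$. Here I would pick $p\in\Omega^c$ with $|w-p|=\delta_\Omega(w)$ (or arbitrarily close, using a limiting argument and lower semi-continuity / the attainment lemma to pass to the limit), and then a second point $q\in\Omega^c$, $q\ne p$. The pair $(p,q)$ is a legitimate competitor, so $\overline{\eta}_\Omega(w)\ge \eta_{\mathbb{C}\setminus\{p,q\}}(w)$. Now I invoke Minda's bi-Lipschitz bound between the Hurwitz and quasihyperbolic metrics on the twice-punctured plane: from \cite[Theorem~6.4]{Min16} one has $\eta_{\mathbb{C}\setminus\{p,q\}}(w)\ge \tfrac{1}{c\,\delta_{\mathbb{C}\setminus\{p,q\}}(w)}$ for a universal constant; the constant $8$ in the statement presumably comes from tracking Minda's explicit constant (or from the Koebe one-quarter theorem applied to the Hurwitz covering map, giving $r_{\mathbb{C}\setminus\{p,q\}}(w)\le 4\,\delta_{\mathbb{C}\setminus\{p,q\}}(w)$, hence $\eta\ge 1/(2\delta)$ — so to land on $1/8$ I expect a further factor absorbed by a crude choice of the second puncture $q$). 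Since $\delta_{\mathbb{C}\setminus\{p,q\}}(w)\le |w-p|=\delta_\Omega(w)$, this yields $\overline{\eta}_\Omega(w)\ge \tfrac{1}{8\delta_{\mathbb{C}\setminus\{p,q\}}(w)}\ge\tfrac{1}{8\delta_\Omega(w)}$.

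The main obstacle I anticipate is pinning down the exact constant $8$ on the lower side: the conceptual inequality $\overline{\eta}_\Omega\gtrsim 1/\delta_\Omega$ is immediate from ``competitor pair + Minda's lower bound on the twice-punctured plane + $\delta_{\mathbb{C}\setminus\{p,q\}}(w)\le\delta_\Omega(w)$'', but getting precisely $1/8$ requires either quoting Minda's theorem with its sharp constant on twice-punctured planes or redoing the Koebe/Schwarz estimate for the Hurwitz covering map of $\mathbb{C}\setminus\{p,q\}$ and being careful about how the second puncture $q$ (which we are free to place, but whose distance to $w$ we do not control) enters. A secondary technical point is the limiting argument needed when the infimum defining $\delta_\Omega(w)$ is not attained at an isolated boundary point, or when the nearest boundary point is unique and one cannot find a distinct second point nearby — but since $\Omega$ is hyperbolic, $\Omega^c$ has at least two points, so a distinct $q$ always exists, and Lemma~\ref{p3lem1} guarantees the outer supremum is attained, so both subtleties are handled by the results already in hand.
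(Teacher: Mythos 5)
Your upper bound is correct and is essentially the paper's argument: the paper observes that $B(w,\delta_\Omega(w))\subset\Omega\subset\mathbb{C}\setminus\{a,b\}$ for \emph{every} pair $a,b\in\Omega^c$, so domain monotonicity gives $\eta_{\mathbb{C}\setminus\{a,b\}}(w)\le\eta_{B(w,\delta_\Omega(w))}(w)=2/\delta_\Omega(w)$ uniformly in $(a,b)$, and the supremum inherits the bound; your detour through $\sup_{a,b}2/\delta_{\mathbb{C}\setminus\{a,b\}}(w)=2/\overline{\delta}_\Omega(w)=2/\delta_\Omega(w)$ lands in the same place. None of the comparisons between $\delta_{\mathbb{C}\setminus\{p,q\}}(w)$ and $\delta_\Omega(w)$ that you go back and forth about is actually needed here: the only fact used is that the ball of radius $\delta_\Omega(w)$ about $w$ avoids every point of $\Omega^c$. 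Likewise, Lemma~\ref{p3lem1} plays no role in this theorem.

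The genuine gap is in the lower bound: you never produce the constant $8$. You reduce to ``$\eta_{\mathbb{C}\setminus\{p,q\}}(w)\ge 1/(c\,\delta_{\mathbb{C}\setminus\{p,q\}}(w))$ for some universal $c$'' and then speculate about where $8$ might come from (Minda's unspecified constant, or Koebe giving $1/(2\delta)$ plus ``a further factor absorbed by the choice of the second puncture'' --- note the second puncture costs nothing, since $\delta_{\mathbb{C}\setminus\{p,q\}}(w)\le|w-p|$ no matter where $q$ sits). The paper's mechanism pins the constant exactly and sidesteps $q$ entirely: since $\mathbb{C}\setminus\{p,q\}\subset\mathbb{C}\setminus\{p\}$, domain monotonicity gives $\eta_{\mathbb{C}\setminus\{p,q\}}(w)\ge\eta_{\mathbb{C}\setminus\{p\}}(w)$, and the Hurwitz density of the once-punctured plane is known in closed form, $\eta_{\mathbb{C}\setminus\{p\}}(w)=1/(8|w-p|)$. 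Taking the supremum over $p\in\Omega^c$ then yields $\overline{\eta}_\Omega(w)\ge\sup_{p\in\Omega^c}1/(8|w-p|)=1/(8\delta_\Omega(w))$. The missing ingredient in your write-up is precisely this explicit once-punctured-plane formula (or, equivalently, a citation of Minda's Theorem~6.4 with its actual constant and a check that it equals $1/8$); without it the stated inequality is asserted but not proved.
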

\begin{proof}
Let $p\in\partial\Omega$ be the nearest point of $w$, that is 
$|w-p|=\delta_\Omega(w)$. Since $\Omega$ is a hyperbolic domain, we can find a point 
$q\in\Omega^c$ with $q\neq p$. It is easy to see that $(\mathbb{C}\setminus\{p,q\})
\subset(\mathbb{C}\setminus\{p\})$. By the domain monotonicity property of the Hurwitz density, we have
\begin{equation}\label{p3eq1}
\eta_{\mathbb{C}\setminus\{p,q\}}(w)\geq\eta_{\mathbb{C}\setminus\{p\}}(w)
\end{equation}
for every $w\in\Omega$. Taking the supremum over $p,q\in\Omega^c$ in \eqref{p3eq1} and 
using the fact that $\eta_{\mathbb{C}\setminus\{p\}}(w)=1/(8|w-p|)$, we obtain
\begin{align*}
\overline{\eta}_\Omega(w)
=\sup_{w_1,w_2\in \Omega^c}\eta_{\mathbb{C}\setminus\{w_1,w_2\}}(w)
\geq\sup_{w_1\in \Omega^c}\eta_{\mathbb{C}\setminus\{w_1\}}(w)
&=\sup_{w_1\in \Omega^c}\cfrac{1}{8|w-w_1|}\\
&=\cfrac{1}{\inf_{w_1\in\Omega^c}8|w-w_1|}
=\cfrac{1}{8\delta_\Omega(w)}.
\end{align*}
The left inequality follows.

For the right inequality, we consider the open ball 
$$
B(w,\delta_\Omega(w))=\{z\in\Omega : |w-z|<\delta_\Omega(w)\}
\subset\mathbb{C}\setminus\{a,b\}
$$ 
for every $a,b\in\Omega^c$. Then it is easy to see by the domain 
monotonicity property of the Hurwitz density that for $w\in\Omega$
\begin{equation}
\overline{\eta}_\Omega(w)\leq\eta_{B(w,\delta(w))}(w)=\cfrac{2}{\delta_\Omega(w)},
\end{equation}
where the equality follows bt the fact that the Hurwitz and the hyperbolic densities coincide on 
simply connected domains. 
\end{proof}


In the next two corollaries, the bi-Lipschitz equivalence of the density $\bar\eta_\Omega$ with the 
Hurwitz density followed by with the hyperbolic density are provided.

\begin{corollary}\label{p3cor1}
The Hurwitz density $\eta_{\Omega}$ and the density $\overline{\eta}_\Omega$ are bi-Lipschitz 
equivalent in the hyperbolic domain $\Omega$. Precisely, we have
$$
\cfrac{\eta_\Omega(w)}{16}\leq\overline{\eta}_\Omega(w)\leq \eta_\Omega(w)
$$
for $w\in\Omega$. Moreover, the second inequality is sharp.
\end{corollary}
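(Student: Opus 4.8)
The plan is to obtain the bi-Lipschitz equivalence of $\eta_\Omega$ and $\overline{\eta}_\Omega$ by chaining Theorem~\ref{p3eq2} with Minda's bi-Lipschitz equivalence of the Hurwitz and quasihyperbolic metrics, namely \cite[Theorem~6.4]{Min16}, which in the form relevant here reads
$$
\cfrac{1}{8\delta_\Omega(w)}\leq\eta_\Omega(w)\leq\cfrac{2}{\delta_\Omega(w)}
$$
for every $w\in\Omega$. (If Minda's stated constants differ, one uses whatever two-sided bound $c_1/\delta_\Omega\le\eta_\Omega\le c_2/\delta_\Omega$ he provides; the argument is identical and only the final numerical constant changes.) The point is that both $\eta_\Omega$ and $\overline{\eta}_\Omega$ are sandwiched between the same pair of multiples of $1/\delta_\Omega(w)$, so eliminating $\delta_\Omega(w)$ between the two sandwich estimates yields a two-sided comparison between $\eta_\Omega$ and $\overline{\eta}_\Omega$ with explicit constants.

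Concretely, first I would invoke Theorem~\ref{p3eq2} to get $\dfrac{1}{8\delta_\Omega(w)}\le\overline{\eta}_\Omega(w)\le\dfrac{2}{\delta_\Omega(w)}$, and then invoke \cite[Theorem~6.4]{Min16} in the displayed form above. From the upper bound on $\overline{\eta}_\Omega$ and the lower bound $\dfrac{1}{8\delta_\Omega(w)}\le\eta_\Omega(w)$ one reads off $\dfrac{1}{\delta_\Omega(w)}\le 8\eta_\Omega(w)$, hence $\overline{\eta}_\Omega(w)\le\dfrac{2}{\delta_\Omega(w)}\le 16\,\eta_\Omega(w)$; this is weaker than what we want, so instead I would use the \emph{lower} bound on $\overline{\eta}_\Omega$ together with the \emph{upper} bound $\eta_\Omega(w)\le\dfrac{2}{\delta_\Omega(w)}$: this gives $\dfrac{1}{\delta_\Omega(w)}\ge\dfrac{\eta_\Omega(w)}{2}$, whence $\overline{\eta}_\Omega(w)\ge\dfrac{1}{8\delta_\Omega(w)}\ge\dfrac{\eta_\Omega(w)}{16}$, which is exactly the left inequality claimed. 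The right inequality $\overline{\eta}_\Omega(w)\le\eta_\Omega(w)$ is not obtained from the $\delta_\Omega$ comparison at all but is precisely the content of the Remark following Lemma~\ref{p3lem2} (domain monotonicity of the Hurwitz metric applied to $\Omega\subset\mathbb{C}\setminus\{w_1,w_2\}$ and taking the supremum over $w_1,w_2\in\Omega^c$), so I would simply cite that Remark.

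For the sharpness of the second inequality, the natural candidate is a simply connected hyperbolic domain, or more precisely a domain whose complement is a single continuum: take $\Omega=\mathbb{C}\setminus(-\infty,0]$, or any half-plane or disk. On such a domain the Hurwitz metric coincides with the hyperbolic metric (the Hurwitz covering map is the Riemann map), and for $w_1,w_2$ chosen on the boundary continuum one can make $\eta_{\mathbb{C}\setminus\{w_1,w_2\}}(w)$ approach $\eta_\Omega(w)$; alternatively, one observes directly that equality $\overline{\eta}_\Omega(w)=\eta_\Omega(w)$ already holds for the twice-punctured plane $\Omega=\mathbb{C}\setminus\{w_1,w_2\}$ itself, since then the defining supremum for $\overline{\eta}_\Omega(w)$ contains the term $\eta_{\mathbb{C}\setminus\{w_1,w_2\}}(w)=\eta_\Omega(w)$ while the Remark gives the opposite inequality. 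The latter is the cleanest route: a single explicit domain realizing equality.

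The routine part is the arithmetic of combining the two sandwich bounds, and the only place requiring a little care is matching the constants in \cite[Theorem~6.4]{Min16} with the normalization $\eta_{\mathbb{C}\setminus\{p\}}(w)=1/(8|w-p|)$ used here; the main ``obstacle'' is really just bookkeeping of which of the four available one-sided estimates to pair up so that the constant $1/16$ comes out, rather than a weaker constant. No new analytic input beyond Theorem~\ref{p3eq2}, the Remark after Lemma~\ref{p3lem2}, and Minda's bi-Lipschitz theorem is needed.
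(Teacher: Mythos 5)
Your proposal is correct and follows essentially the same route as the paper: the left inequality is obtained by combining Theorem~\ref{p3eq2} with Minda's bi-Lipschitz comparison of $\eta_\Omega$ and $1/\delta_\Omega$, the right inequality is the domain-monotonicity remark, and sharpness is witnessed by the twice-punctured plane. Your write-up merely makes explicit which pair of one-sided bounds must be combined to get the constant $1/16$, which the paper leaves implicit.
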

\begin{proof}
The first inequality directly follows from \cite[Theorem~6.4]{Min16} and Theorem \ref{p3eq2}, 
while the second one always holds. It is easy to see that the equality in the second inequality holds when 
$\Omega$ is a twice punctured complex plane.
\end{proof}


Boundary of a hyperbolic domain $\Omega$ is said to be {\em uniformly perfect} if there exists a constant 
$b>0$ such that $\lambda_\Omega(w)\geq b/\delta(w)$ for all $w\in\Omega$. However, from \eqref{p3eq3} 
and the domain monotonicity property of the hyperbolic density we have the reverse inequality 
$\lambda_\Omega(w)\leq 2/\delta(w)$. This leads to the second corollary as follows.

\begin{corollary}
Let boundary of the domain $\Omega\subset\mathbb{C}$ be  uniformly perfect. Then
the hyperbolic density $\lambda_\Omega$ and the density $\overline{\eta}_\Omega$ are 
bi-Lipschitz equivalent in $\Omega$. 
\end{corollary}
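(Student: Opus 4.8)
The plan is to chain together the bi-Lipschitz equivalences already established in this section with the defining inequality of uniform perfectness. The statement to prove is that on a domain $\Omega$ with uniformly perfect boundary, $\lambda_\Omega$ and $\overline{\eta}_\Omega$ are bi-Lipschitz equivalent, i.e. there exist positive constants $c_1, c_2$ with $c_1\lambda_\Omega(w) \le \overline{\eta}_\Omega(w) \le c_2\lambda_\Omega(w)$ for all $w\in\Omega$. The natural route is to pass through the quasihyperbolic density $1/\delta_\Omega$, for which Theorem~\ref{p3eq2} already gives
$$
\frac{1}{8\delta_\Omega(w)} \le \overline{\eta}_\Omega(w) \le \frac{2}{\delta_\Omega(w)}.
$$
So it suffices to show $1/\delta_\Omega$ is itself bi-Lipschitz equivalent to $\lambda_\Omega$ under the uniform perfectness hypothesis.

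The first step is to recall that one half of this latter equivalence is completely general: the domain monotonicity property of the hyperbolic density (comparing $\Omega$ with a disk $B(w,\delta_\Omega(w))\subset\Omega$, whose hyperbolic density at the center is $2/\delta_\Omega(w)$) yields $\lambda_\Omega(w) \le 2/\delta_\Omega(w)$, which is exactly the reverse inequality noted just before the corollary in the excerpt. The second step is to invoke the definition of uniform perfectness directly: there is a constant $b>0$ with $\lambda_\Omega(w) \ge b/\delta_\Omega(w)$ for all $w\in\Omega$, equivalently $1/\delta_\Omega(w) \le \lambda_\Omega(w)/b$. Combining these two gives
$$
\frac{b}{\delta_\Omega(w)} \le 2\lambda_\Omega(w) \quad\text{and}\quad \lambda_\Omega(w) \le \frac{2}{\delta_\Omega(w)},
$$
so $1/\delta_\Omega$ and $\lambda_\Omega$ are comparable with explicit constants.

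The final step is simply to substitute these bounds into Theorem~\ref{p3eq2}. From $\overline{\eta}_\Omega(w) \le 2/\delta_\Omega(w) \le (4/b)\,\lambda_\Omega(w)$ we get the upper bound, and from $\overline{\eta}_\Omega(w) \ge 1/(8\delta_\Omega(w)) \ge \lambda_\Omega(w)/16$ (using $1/\delta_\Omega(w) \ge \lambda_\Omega(w)/2$) we get the lower bound; hence
$$
\frac{1}{16}\,\lambda_\Omega(w) \le \overline{\eta}_\Omega(w) \le \frac{4}{b}\,\lambda_\Omega(w),
$$
which is the asserted bi-Lipschitz equivalence. I do not expect any genuine obstacle here — every ingredient is either quoted (Theorem~\ref{p3eq2}, the domain monotonicity remark) or is the hypothesis itself; the only care needed is bookkeeping of the constants, and in particular being explicit that the lower constant is absolute while the upper constant $4/b$ depends on the uniform perfectness constant of $\partial\Omega$. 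One could equally well route through Corollary~\ref{p3cor1} together with Minda's bi-Lipschitz equivalence of $\eta_\Omega$ and the quasihyperbolic metric plus the uniform perfectness comparison of $\eta_\Omega$ (or $\lambda_\Omega$) with $1/\delta_\Omega$, but the direct path via Theorem~\ref{p3eq2} is the cleanest.
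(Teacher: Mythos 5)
Your proposal is correct and follows essentially the same route the paper intends: combine Theorem~\ref{p3eq2} with the two comparisons $\lambda_\Omega(w)\ge b/\delta_\Omega(w)$ (uniform perfectness) and $\lambda_\Omega(w)\le 2/\delta_\Omega(w)$ (domain monotonicity), which is precisely the argument sketched in the paragraph preceding the corollary. The only quibble is cosmetic: your upper constant can be sharpened from $4/b$ to $2/b$, since $2/\delta_\Omega(w)\le (2/b)\,\lambda_\Omega(w)$ already follows from the uniform perfectness inequality.
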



Observe that Corollary \ref{p3cor1} establishes a bi-Lipschitz equivalence of the 
Hurwitz density $\eta_\Omega$ and the density $\overline{\eta}_\Omega$ on hyperbolic domains. 
However, in our next theorem we provide a sharper bound whenever domain is hyperbolic with connected boundary. 

As an application to the Riemann mapping theorem, it is easy to see that simply connected proper domains of $\mathbb{C}$ have connected boundaries. However, there exist a non-simply connected hyperbolic domain with connected boundary. For instance the exterior of the unit disk $\Omega=\{w\in\mathbb{C}:|w|>1\}$ is a non-simply connected hyperbolic domain whose boundary $\partial\Omega=\partial\mathbb{D}=\{w\in\mathbb{C}:|w|=1\}$ is a connected set.

\begin{theorem}\label{p3thm4.1}
Let $\Omega$ be a hyperbolic domain with connected boundary, then for every $w\in\Omega$ we have
$$
\overline{\eta}_\Omega(w)\leq\eta_{\Omega}(w)\leq \cfrac{K}{4}\,\overline{\eta}_\Omega(w),
$$
where $K=1/(2\lambda_{\mathbb{C}\setminus\{0,1\}}(-1))\approx 4.3859$.
\end{theorem}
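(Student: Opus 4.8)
The plan is to exploit the comparison between the Hurwitz metric and the hyperbolic metric. Recall that the hyperbolic metric is always dominated by the Hurwitz metric, i.e.\ $\lambda_\Omega \le \eta_\Omega$, and that on the twice-punctured plane the two notions can be compared explicitly. First I would fix $w \in \Omega$ and, using Lemma~\ref{p3lem1}, pick distinct $p,q \in \Omega^c$ with $\overline{\eta}_\Omega(w) = \eta_{\mathbb{C}\setminus\{p,q\}}(w)$. The left inequality $\overline{\eta}_\Omega(w) \le \eta_\Omega(w)$ is the first Remark, so the work is entirely in the upper bound $\eta_\Omega(w) \le \tfrac{K}{4}\,\overline{\eta}_\Omega(w)$.

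For the upper bound I would set $\delta = \delta_\Omega(w)$ and use the connectedness of $\partial\Omega$. Since $\partial\Omega$ is connected and $\Omega$ is hyperbolic, $\partial\Omega$ is a nondegenerate continuum; choosing the nearest boundary point $p$ with $|w-p| = \delta$, connectedness of $\partial\Omega$ forces $\partial\Omega$ to contain points arbitrarily far from $p$ relative to $\delta$ — in particular one can locate a second boundary point $q$ so that the configuration $\{p,q\}$, after an affine normalization sending $p \mapsto 0$, $q \mapsto 1$ and hence $w$ to some point, makes $\mathbb{C}\setminus\{p,q\}$ as ``small'' as possible around $w$. By the Möbius invariance of $\overline{\eta}$ under affine maps (Lemma~\ref{p3lem2}) and of $\eta_\Omega$, I may normalize $p=0$, $q=1$. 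The key estimate is then: because $\partial\Omega$ is connected and separates $w$ from $\infty$ (or wraps around suitably), the quasihyperbolic density is controlled two-sidedly by $\delta$ on such domains, and combining Minda's bi-Lipschitz bound $\eta_\Omega \le c/\delta_\Omega$ with the lower bound $\overline{\eta}_\Omega \ge 1/(8\delta_\Omega)$ from Theorem~\ref{p3eq2} would already give a crude constant. To sharpen it to $K/4$ I would instead argue directly: the extremal twice-punctured domain is $\mathbb{C}\setminus\{0,1\}$, and $\eta_{\mathbb{C}\setminus\{0,1\}}(w) = 2\lambda_{\mathbb{C}\setminus\{0,1\}}(w)$ fails in general but the worst-case ratio $\eta_\Omega / \eta_{\mathbb{C}\setminus\{p,q\}}$ is attained when the relevant hyperbolic density is evaluated at the midpoint $-1$ of the two punctures relative to $w$, yielding the constant $K = 1/(2\lambda_{\mathbb{C}\setminus\{0,1\}}(-1))$.

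Concretely, the chain I would assemble is
\[
\eta_\Omega(w) \;\le\; \frac{2}{\delta_\Omega(w)}
\;\le\; \frac{K}{4}\cdot\frac{8}{\delta_\Omega(w)}\cdot\frac{1}{2}
\;=\; \frac{K}{4}\cdot\frac{1}{8\delta_\Omega(w)}\cdot 8
\]
— i.e.\ I would route the bound through $\eta_\Omega(w) \le 2/\delta_\Omega(w)$ (valid on any proper subdomain by inscribing a disk, as in the right half of Theorem~\ref{p3eq2}'s proof) and through a matching \emph{lower} bound $\overline{\eta}_\Omega(w) \ge 4/(K\,\delta_\Omega(w))$ that uses connectedness of $\partial\Omega$ to guarantee two far-apart boundary points. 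The point is that for a connected boundary one can always choose $\{p,q\}\subset\partial\Omega$ with $|p-q|$ comparable to (indeed much larger than) $\delta_\Omega(w)$, and for such a spread-out pair the Hurwitz density of $\mathbb{C}\setminus\{p,q\}$ at $w$ is close to that of a half-plane, namely $\approx 2/\delta$ up to the explicit factor $K/4$ coming from the $\mathbb{C}\setminus\{0,1\}$ model evaluated at $-1$. So the final step is the estimate $\eta_{\mathbb{C}\setminus\{p,q\}}(w)\ge \tfrac{4}{K}\,\eta_{\,\text{half-plane through }p}(w)$ together with $\eta_{\text{half-plane}}(w)=1/\delta_\Omega(w)$ when the half-plane is tangent at the nearest point.

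The main obstacle I anticipate is making rigorous the claim that connectedness of $\partial\Omega$ lets one choose the second puncture $q$ far enough out that $\mathbb{C}\setminus\{p,q\}$ genuinely behaves like a half-plane near $w$ with the \emph{sharp} constant $K/4$, rather than merely some universal constant. This requires a monotonicity/continuity argument in the position of $q$: as $q\to\infty$ along a ray, $\eta_{\mathbb{C}\setminus\{p,q\}}(w)\to\eta_{\mathbb{C}\setminus\{p\}}(w)=1/(8|w-p|)$, which is \emph{smaller}, so in fact the supremum over $q$ on a connected boundary is delicate — one must find the optimal finite $q$, and identifying it as the one producing the $\mathbb{C}\setminus\{0,1\}$-at-$-1$ normalization is where the constant $K=1/(2\lambda_{\mathbb{C}\setminus\{0,1\}}(-1))\approx 4.3859$ enters. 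I would handle this by an extremal-length or explicit-covering-map computation on $\mathbb{C}\setminus\{0,1\}$, using that the Hurwitz covering map there is known and that $\lambda_{\mathbb{C}\setminus\{0,1\}}(-1)$ can be evaluated via the modular function.
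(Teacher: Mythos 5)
Your proposal has the right raw materials (normalize two boundary punctures to $\{0,1\}$, use $\eta\ge\lambda$ on the twice-punctured plane, and feed in a known estimate for $\lambda_{\mathbb{C}\setminus\{0,1\}}$ involving the value at $-1$), but the one step that actually makes the argument work is missing, and the substitute you propose goes in the wrong direction. The paper's device is to take $w_1\in\partial\Omega$ nearest to $w$ and then to choose the \emph{second} puncture $w_2$ on the circle $\Gamma=\{z:|z-w_1|=|w-w_1|\}$ — i.e.\ at distance exactly $\delta_\Omega(w)$ from $w_1$, which connectedness of $\partial\Omega$ is invoked to supply. After the affine normalization $h(z)=(z-w_1)/(w_2-w_1)$ one then has $|h(w)|=1$, so in Hempel's lower bound $\lambda_{\mathbb{C}\setminus\{0,1\}}(\zeta)\ge 1/\bigl(2|\zeta|\,(|\log|\zeta||+K)\bigr)$ the logarithmic term vanishes identically, leaving the clean estimate $\overline{\eta}_\Omega(w)\ge \eta_{\mathbb{C}\setminus\{w_1,w_2\}}(w)\ge\lambda_{\mathbb{C}\setminus\{w_1,w_2\}}(w)\ge 1/(2K\delta_\Omega(w))$. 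Your plan instead asks for $q$ \emph{far} from $p$ so that $\mathbb{C}\setminus\{p,q\}$ "behaves like a half-plane near $w$"; this is false — as you yourself observe in your last paragraph, $\eta_{\mathbb{C}\setminus\{p,q\}}(w)\to\eta_{\mathbb{C}\setminus\{p\}}(w)=1/(8|w-p|)$ as $q\to\infty$, and in the Hempel estimate $|\log|h(w)||\to\infty$, so a spread-out pair gives a \emph{worse} bound, not a better one. You flag this tension as "delicate" and defer it to an unspecified extremal-length or covering-map computation, but that deferral is precisely the content of the theorem; without the choice $|w_2-w_1|=|w-w_1|$ the constant never materializes.

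Two further concrete problems. First, your displayed chain of inequalities is arithmetically incoherent: $\frac{K}{4}\cdot\frac{8}{\delta}\cdot\frac{1}{2}=\frac{K}{\delta}$ while $\frac{K}{4}\cdot\frac{1}{8\delta}\cdot 8=\frac{K}{4\delta}$, so the claimed equality fails, and neither expression is ever tied to $\overline{\eta}_\Omega(w)$. Second, even your stated target lower bound $\overline{\eta}_\Omega(w)\ge 4/(K\delta_\Omega(w))$ does not combine with $\eta_\Omega(w)\le 2/\delta_\Omega(w)$ to give the claimed factor $K/4$ (it yields only $\eta_\Omega\le \tfrac{K}{2}\,\overline{\eta}_\Omega$); to land on $K/4$ one needs the pairing the paper uses, namely $\overline{\eta}_\Omega(w)\ge 1/(2K\delta_\Omega(w))$ together with an upper bound of the form $\eta_\Omega(w)\le 1/(8\delta_\Omega(w))$ — and you should convince yourself which upper bound for $\eta_\Omega$ in terms of $\delta_\Omega$ is actually available here, since this is the most fragile point of the whole argument. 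Also note that invoking Lemma~\ref{p3lem1} to produce the extremal pair $p,q$ is unnecessary for the upper bound: one only needs \emph{some} admissible pair giving a good lower bound for $\overline{\eta}_\Omega(w)$, not the extremal one.
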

\begin{proof}
Let $w\in\Omega$. Choose a point $w_1\in\partial\Omega$ such that $|w-w_1|=\inf\{|w-w^*|:w^*\in\partial\Omega\}$. Consider the circle $\Gamma=\{z\in\mathbb{C}:|z-w_1|=|w-w_1|\}$.
Since boundary of $\Omega$ is connected, there exists a point $w_2\in\partial\Omega\cap\Gamma$. Since the function $h(w)=(w-w_1)/(w_2-w_1)$ from $\mathbb{C}\setminus\{w_1,w_2\}$ onto $\mathbb{C}\setminus\{0,1\}$ is a M\"obius transformation, by Lemma~\ref{p3lem2} it is an infinitesimal isometry of the $\overline{\eta}$ density. As a consequence, we have
$$
\overline{\eta}_\Omega(w)=|h'(w)|\overline{\eta}_{h(\Omega)}(h(w))
=\cfrac{1}{|w_2-w_1|}\,\overline{\eta}_{h(\Omega)}\left(\cfrac{w-w_1}{w_2-w_1}\right).
$$
Since on hyperbolic domains, the Hurwitz density exceeds the hyperbolic density, we obtain
\begin{align*}
\overline{\eta}_\Omega(w)
& \geq\cfrac{1}{|w_2-w_1|}\,\overline{\lambda}_{h(\Omega)}\left(\cfrac{w-w_1}{w_2-w_1}\right)\\
& \geq\cfrac{1}{|w_2-w_1|}\cfrac{1}{\left(2\left|\cfrac{w-w_1}{w_2-w_1}\right|\left|\log\left|\cfrac{w-w_1}{w_2-w_1}\right|\right|+2K\left|\cfrac{w-w_1}{w_2-w_1}\right|\right)},
\end{align*}
where the last inequality follows from \cite[Theorem 2]{Hem79}.
Since $w_2\in\Gamma$, it follows that
$$
\overline{\eta}_\Omega(w)\geq\cfrac{1}{2K|w-w_1|}.
$$
The desired result follows from the inequality $1/8|w-w_1|\geq\eta_{\Omega}$. 
\end{proof}


\medskip
\noindent 
{\bf Acknowledgement.}
The research work of Arstu is 
supported by CSIR-UGC (Grant No: 21/06/2015(i)EU-V) and of S.K. Sahoo is
supported by MATRICS-SERB (Grant No: MTR/2019/001630).

\medskip

\end{document}